\newtheorem{thm}{Theorem}
\newtheorem{defn}{Definition}
\newtheorem{lemma}{Lemma}
\newtheorem{pro}{Proposition}
\newtheorem{rk}{Remark}
\numberwithin{equation}{section} \setcounter{tocdepth}{1}
\begin{document}
\title [A DISCRETE-TIME DYNAMICAL SYSTEM OF MOSQUITO POPULATION]
{A DISCRETE-TIME DYNAMICAL SYSTEM OF MOSQUITO POPULATION}

\author{Z.S. Boxonov}

\address{Z.\ S.\ Boxonov\\ V.I.Romanovskiy Institute of Mathematics of Uzbek Academy of Sciences,
Tashkent, Uzbekistan.}
\email {z.b.x.k@mail.ru}

\begin{abstract}
In this paper, we study a discrete-time dynamical system generated by the evolution operator of a wild mosquito population with specific rates of birth and emergence from larvae to adults. The death rates of larvae and adults are assumed to be constant. We find fixed points and under some conditions, on parameters, we show the global attractiveness of a fixed point.
\end{abstract}

\subjclass[2020] {92D25}

\keywords{mosquito population;  fixed point;
attracting; repelling; limit point.} \maketitle

\section{Introduction}
Many problems of natural sciences can be solved by using nonlinear dynamical systems (see \cite{D}, \cite{UR}, \cite{Rpd}). In this paper, we consider a wild mosquito population without the presence of
sterile mosquitoes. For the simplified stage-structured mosquito population, we group the three aquatic stages into the larvae class by $x$, and divide the mosquito population into the larvae class and the adults, denoted by $y$. We assume that the density dependence exists only in the larvae stage \cite{J.Li}.

We let the birth rate, that is, the oviposition rate of adults be $\beta(\cdot)$; the rate of emergence from larvae to adults be a function of the larvae with the form of $\alpha(1-k(x))$,
where $\alpha>0$ is the maximum emergence rate, $0\leq k(x)\leq 1$, with $k(0)=0, k'(x)>0$, and $\lim\limits_{x\rightarrow \infty}k(x)=1$, is the functional response due to the intraspecific competition \cite{J}. We further assume a functional response for $k(x)$, as in \cite{J}, in the form $$k(x)=\frac{x}{1+x}.$$ We let the death rate of larvae be a linear function, denoted by $d_{0}+d_{1}x$, and
the death rate of adults be constant, denoted by $\mu$. The interactive dynamics for the wild mosquitoes are governed by the following system:
\begin{equation}\label{system}
\left\{%
\begin{array}{ll}
    \frac{dx}{dt}=\beta y-\frac{\alpha x}{1+x}-(d_{0}+d_{1}x)x,\\[3mm]
    \frac{dy}{dt}=\frac{\alpha x}{1+x}-\mu y. \\
\end{array}%
\right.\end{equation}
In this paper (as in \cite{Rpd} - \cite{RB3}) we study the discrete time dynamical systems associated to the system (\ref{system}).
Define the operator $W:\mathbb{R}^{2}\rightarrow \mathbb{R}^{2}$ by
\begin{equation}\label{systema}
\left\{%
\begin{array}{ll}
    x'=\beta y-\frac{\alpha x}{1+x}-(d_{0}+d_{1}x)x+x,\\[3mm]
    y'=\frac{\alpha x}{1+x}-\mu y+y
\end{array}%
\right.\end{equation}
where $\alpha >0, \beta >0, \mu >0,\ d_{0}\geq0,\ d_{1}\geq0.$

The dynamics of the operator (\ref{systema}) were studied in detail under conditions $\beta=\mu,\ d_{0}=d_{1}=0$  in \cite{BR1} and under conditions $\beta\neq\mu,\ d_{0}=d_{1}=0$ in \cite{BR2}.

In this paper, we study a discrete dynamical system of the operator $W$ given by system (\ref{systema}), which differs from \cite{BR1} and \cite{BR2}.

\section{Results}

Let $\mathbb{R}_{+}^{2}=\{(x,y): x,y\in\mathbb{R}, x\geq0, y\geq0\}.$

Note that the operator $W$ well defined on $\mathbb{R}^2_{+}\setminus \{(x, y): x=-1\}$. But to define a dynamical system of continuous operator as population we assume $x\geq0$ and $y\geq0$. Therefore, we choose parameters of the operator $W$ to
guarantee that it maps $\mathbb{R}^2_{+}$ to itself.
\begin{lemma}(see \cite{BR1}) If
\begin{equation}\label{parametr}
\alpha>0,\ \beta>0,\ 0<\mu\leq1,\ d_{0}>0,\ \alpha+d_{0}\leq1,\ d_{1}=0
\end{equation}
then the operator (\ref{systema}) maps the set $\mathbb{R}_{+}^{2}$ to itself.
\end{lemma}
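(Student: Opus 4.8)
The plan is to show that under the parameter conditions, both coordinates of the image point $(x', y')$ are nonnegative whenever $(x,y) \in \mathbb{R}^2_+$. Since $d_1 = 0$, the system simplifies to
\begin{equation*}
x' = \beta y - \frac{\alpha x}{1+x} - d_0 x + x, \qquad y' = \frac{\alpha x}{1+x} - \mu y + y.
\end{equation*}
The nonnegativity of $y'$ should be the easier of the two: since $0 < \mu \leq 1$, the term $(1-\mu)y$ is nonnegative, and $\frac{\alpha x}{1+x} \geq 0$ for $x \geq 0$, so $y' \geq 0$ follows immediately. The substance of the proof lies in showing $x' \geq 0$.

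For the $x'$ coordinate, first I would drop the nonnegative term $\beta y$ (since $\beta > 0$ and $y \geq 0$), reducing the problem to showing that
\begin{equation*}
g(x) := x - \frac{\alpha x}{1+x} - d_0 x \geq 0 \quad \text{for all } x \geq 0.
\end{equation*}
Factoring out $x$ gives $g(x) = x\left(1 - d_0 - \frac{\alpha}{1+x}\right)$. Since $x \geq 0$, it suffices to show the bracketed factor is nonnegative, i.e. $1 - d_0 \geq \frac{\alpha}{1+x}$. Here I would use the key structural observation that $\frac{\alpha}{1+x}$ is a decreasing function of $x$ on $[0,\infty)$, so its maximum over $x \geq 0$ is attained at $x = 0$, where it equals $\alpha$. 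Thus it is enough to verify $1 - d_0 \geq \alpha$, which is exactly the hypothesis $\alpha + d_0 \leq 1$ from \eqref{parametr}.

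The main obstacle, such as it is, is recognizing that the binding constraint comes from the $x = 0$ boundary rather than from large $x$; intuitively one might worry about the emergence term $\frac{\alpha x}{1+x}$ draining the larvae class, but that term saturates at $\alpha$ as $x \to \infty$ while staying below $\alpha x$ for all $x > 0$, so the per-capita loss $\frac{\alpha}{1+x} + d_0$ is in fact largest at $x=0$. Once this monotonicity is pinned down, the condition $\alpha + d_0 \leq 1$ is seen to be precisely what guarantees that the total per-capita loss rate never exceeds $1$, so that $x$ cannot go negative in a single step. I would conclude by combining $x' \geq \beta y + g(x) \geq 0$ with the already-established $y' \geq 0$ to obtain $W(\mathbb{R}^2_+) \subseteq \mathbb{R}^2_+$.
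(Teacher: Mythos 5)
Your proof is correct and complete: the nonnegativity of $y'$ follows at once from $0<\mu\le 1$, and writing $x'=\beta y+x\bigl(1-d_0-\tfrac{\alpha}{1+x}\bigr)$ and bounding $\tfrac{\alpha}{1+x}\le\alpha$ at $x=0$ reduces everything to the hypothesis $\alpha+d_0\le 1$. The paper itself gives no proof of this lemma (it is quoted from \cite{BR1}), but your argument is the natural one and there is nothing to add or correct.
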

In this case the system (\ref{systema}) becomes
\begin{equation}\label{syst}
W_{0}:\left\{%
\begin{array}{ll}
    x'=\beta y-\left(\frac{\alpha }{1+x}+d_{0}-1\right)x,\\[3mm]
    y'=\frac{\alpha x}{1+x}+(1-\mu)y.
\end{array}%
\right.\end{equation}

\subsection{Fixed points.}\

A point $z\in\mathbb{R}_{+}^{2}$ is called a fixed point of $W_{0}$ if
$W_{0}(z)=z$.

For fixed point of $W_{0}$ the following holds.
\begin{pro}\label{fixed} The fixed points for (\ref{syst}) are as follows:
\begin{itemize}
  \item If $\beta\leq\mu\left(1+\frac{d_{0}}{\alpha}\right)$ then the operator (\ref{syst}) has a unique fixed point $z=(0,0).$
  \item If $\beta>\mu\left(1+\frac{d_{0}}{\alpha}\right)$ then mapping (\ref{syst}) has two
fixed points with $$z_{1}=(0,0), \ \ z_{2}=\left(\frac{\alpha(\beta-\mu)}{\mu d_{0}}-1,\frac{\alpha(\beta-\mu)-\mu d_{0}}{\mu(\beta-\mu)}\right).$$
\end{itemize}
\end{pro}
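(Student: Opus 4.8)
The plan is to solve the fixed-point system $W_0(z)=z$ directly. Writing $z=(x,y)$ and imposing $x'=x$, $y'=y$ in (\ref{syst}), the $+x$ and $+y$ terms cancel on both sides, and the system collapses to the two relations
\begin{equation*}
\beta y=\frac{\alpha x}{1+x}+d_{0}x,\qquad \mu y=\frac{\alpha x}{1+x}.
\end{equation*}
The key simplification is to use the second relation to eliminate the nonlinear term $\frac{\alpha x}{1+x}$ from the first; subtracting then yields the clean linear relation $(\beta-\mu)\,y=d_{0}x$, which is what drives the whole classification.

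First I would dispose of the boundary case $x=0$. The second relation then forces $\mu y=0$, and since $\mu>0$ we get $y=0$; hence $z_{1}=(0,0)$ is a fixed point for all admissible parameters. Next, assuming $x\neq0$, I would solve the reduced system: from $\mu y=\frac{\alpha x}{1+x}$ one obtains $y=\frac{\alpha x}{\mu(1+x)}$, and substituting into $(\beta-\mu)\,y=d_{0}x$ and cancelling the factor $x\neq0$ (legitimate since $d_{0}>0$ by (\ref{parametr})) gives a single linear equation for $1+x$, with solution $1+x=\frac{\alpha(\beta-\mu)}{\mu d_{0}}$. This is exactly the first coordinate of $z_{2}$, and back-substituting into the formula for $y$ produces the stated second coordinate after routine algebra.

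The only point requiring genuine care — and the main, if mild, obstacle — is matching the admissibility of this second point to the dichotomy in the statement. A fixed point must lie in $\mathbb{R}_{+}^{2}$, so I need $x\geq0$, i.e. $\frac{\alpha(\beta-\mu)}{\mu d_{0}}\geq1$, which rearranges precisely to $\beta\geq\mu\!\left(1+\frac{d_{0}}{\alpha}\right)$. When this inequality is strict the computed point has $x>0$ and is genuinely distinct from the origin, giving two fixed points; at equality $x=0$ and $z_{2}$ collapses onto $z_{1}$; and when $\beta<\mu\!\left(1+\frac{d_{0}}{\alpha}\right)$ the formula returns $x<0$, which is inadmissible, so only $z_{1}$ survives. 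Tracking these three regimes and folding the equality case into the first bullet yields exactly the two cases asserted in the proposition.
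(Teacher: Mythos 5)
Your proposal is correct and follows essentially the same route as the paper: both reduce $W_0(z)=z$ to the pair $\beta y=\frac{\alpha x}{1+x}+d_0x$, $\mu y=\frac{\alpha x}{1+x}$ and read off the admissibility condition $\frac{\alpha(\beta-\mu)}{\mu d_0}\geq 1\Leftrightarrow\beta\geq\mu\left(1+\frac{d_0}{\alpha}\right)$. If anything, your elimination argument (deriving $(\beta-\mu)y=d_0x$ and solving) is slightly more complete than the paper's proof, which merely states that the two points are solutions and checks when $z_2$ lies in $\mathbb{R}_+^2$, without explicitly showing that no other solutions exist.
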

\begin{proof}
We have to solve the following system
\begin{equation}\label{Fsystema}
\left\{%
\begin{array}{ll}
    x=\beta y-\left(\frac{\alpha }{1+x}+d_{0}-1\right)x,\\[3mm]
    y=\frac{\alpha x}{1+x}+(1-\mu)y
\end{array}%
\right.\end{equation}
It is easy to see that $x_{1}=0,\ y_{1}=0$ and $x_{2}=\frac{\alpha(\beta-\mu)}{\mu d_{0}}-1,\ y_{2}=\frac{\alpha}{\mu}-\frac{d_{0}}{\beta-\mu}$ are solutions to (\ref{Fsystema}).
If $\beta\leq\mu\left(1+\frac{d_{0}}{\alpha}\right)$ then $x_{2}\notin\mathbb{R}_{+},$ otherwise $x_{2}\in\mathbb{R}_{+}.$
\end{proof}

\subsection{Types of the fixed points}\

Now we shall examine the type of the fixed points.
\begin{defn}\label{d1}
(see\cite{D}) A fixed point $s$ of the operator $W$ is called
hyperbolic if its Jacobian $J$ at $s$ has no eigenvalues on the
unit circle.
\end{defn}

\begin{defn}\label{d2}
(see\cite{D}) A hyperbolic fixed point $s$ called:

1) attracting if all the eigenvalues of the Jacobi matrix $J(s)$
are less than 1 in absolute value;

2) repelling if all the eigenvalues of the Jacobi matrix $J(s)$
are greater than 1 in absolute value;

3) a saddle otherwise.
\end{defn}
To find the type of a fixed point of the operator (\ref{syst})
we write the Jacobi matrix at $z=(x, y)$

$$J(z)=\left(%
\begin{array}{cc}
  1-d_{0}-\frac{\alpha}{(1+x)^2} & \beta \\
  \frac{\alpha}{(1+x)^2} & 1-\mu \\
\end{array}%
\right).$$

We calculate eigenvalues of Jacobian matrix at the fixed point ($0, 0)$. If we denote $1-\lambda=\Lambda$ then we obtain
\begin{equation}\label{eq00}
\Lambda^2-(d_{0}+\alpha+\mu)\Lambda+\mu(d_{0}+\alpha)-\alpha\beta=0
\end{equation}
By (\ref{eq00}) we have
\begin{equation}\label{lambda12}
\begin{split}
\Lambda_{1}=1-\lambda_1=\frac{1}{2}\left(\mu+d_{0}+\alpha+\sqrt{(\mu+d_{0}+\alpha)^2-4(\mu(d_{0}+\alpha)-\alpha\beta)}\right),\\
\Lambda_{2}=1-\lambda_2=\frac{1}{2}\left(\mu+d_{0}+\alpha-\sqrt{(\mu+d_{0}+\alpha)^2-4(\mu(d_{0}+\alpha)-\alpha\beta)}\right).
\end{split}
\end{equation}
The inequality $|\lambda_{1,2}|<1$ is equivalent to $0<\Lambda_{1,2}<2.$

Since $0<d_{0}+\alpha+\mu\leq2$ (see (\ref{parametr})), we have $0<\Lambda_{1}+\Lambda_{2}\leq2$ and $(\Lambda_{1}-2)+(\Lambda_{2}-2)=d_{0}+\alpha+\mu-4<0$. If $\beta<\mu\left(1+\frac{d_{0}}{\alpha}\right)$ then $\Lambda_{1}\cdot\Lambda_{2}=\alpha(\mu(1+\frac{d_{0}}{\alpha})-\beta)>0$ and $(\Lambda_{1}-2)\cdot(\Lambda_{2}-2)=\Lambda_{1}\cdot\Lambda_{2}-2(\Lambda_{1}+\Lambda_{2})+4=\alpha(\mu(1+\frac{d_{0}}{\alpha})-\beta)+2(2-(d_{0}+\alpha+\mu))>0.$ So $0<\Lambda_{1,2}<2.$
Therefore, if $\beta<\mu\left(1+\frac{d_{0}}{\alpha}\right)$ the fixed point $(0, 0)$ is attracting.

If the parameters satisfy the condition $\beta=\mu\left(1+\frac{d_{0}}{\alpha}\right)$ or $\beta=\frac{1}{\alpha}\left(2-\mu)(2-d_{0}-\alpha\right)$, the solutions of the equation (\ref{eq00}) are $0$ and $2$, respectively. Thus,  if $\beta=\mu\left(1+\frac{d_{0}}{\alpha}\right)$ or $\beta=\frac{1}{\alpha}\left(2-\mu)(2-d_{0}-\alpha\right),$ then the fixed point $(0, 0)$ is non hyperbolic.

The inequality $|\lambda_{1,2}|>1$ is equivalent to $\Lambda_{1}<0$ or $\Lambda_{2}>2$ or $\Lambda_{1}>2,\ \Lambda_{2}<0.$

For the values of parameters given in (\ref{parametr}) the inequalities $\Lambda_1<0$ or $\Lambda_2>2$ do not hold. Next we check the case $\Lambda_1>2,\ \Lambda_2<0:$

\begin{equation}\label{l1l2}\left\{%
\begin{array}{ll}
    \Lambda_{1}=\frac{1}{2}\left(\mu+d_{0}+\alpha+\sqrt{(\mu+d_{0}+\alpha)^2-4(\mu(d_{0}+\alpha)-\alpha\beta)}\right)>2,\\[2mm]
    \Lambda_{2}=\frac{1}{2}\left(\mu+d_{0}+\alpha-\sqrt{(\mu+d_{0}+\alpha)^2-4(\mu(d_{0}+\alpha)-\alpha\beta)}\right)<0.
\end{array}%
\right.
\end{equation}

If $\beta>\mu\left(1+\frac{d_{0}}{\alpha}\right)$ then $\Lambda_{1}\cdot\Lambda_{2}=\alpha(\mu(1+\frac{d_{0}}{\alpha})-\beta)<0.$ So $\Lambda_{1}>0$ and $\Lambda_{2}<0.$

From the inequality $\Lambda_{1}>2$ we obtain $\beta>\mu\left(1+\frac{d_{0}}{\alpha}\right)+\frac{1}{\alpha}(4-2(\mu+d_{0}+\alpha)).$
Therefore, if $\mu\left(1+\frac{d_{0}}{\alpha}\right)<\beta<\mu\left(1+\frac{d_{0}}{\alpha}\right)+\frac{1}{\alpha}(4-2(\mu+d_{0}+\alpha))$ the fixed point $(0, 0)$ is saddle and if $\beta>\mu\left(1+\frac{d_{0}}{\alpha}\right)+\frac{1}{\alpha}(4-2(\mu+d_{0}+\alpha))$ the fixed point $(0, 0)$ is repelling.

Denote
 $$x^*=\frac{\alpha(\beta-\mu)}{\mu d_{0}}-1,\ y^*=\frac{\alpha(\beta-\mu)-\mu d_{0}}{\mu(\beta-\mu)}.$$

We calculate eigenvalues of Jacobian matrix at the fixed point ($x^*, y^*)$. If $1-\bar{\lambda}=\bar{\Lambda}$ then we get
\begin{equation}\label{eqxy}
\bar{\Lambda}^2-\left(\mu+d_{0}+\frac{\alpha}{(1+x^*)^2}\right)\bar{\Lambda}+\mu\left(d_{0}+\frac{\alpha}{(1+x^*)^2}\right)-\frac{\alpha\beta}{(1+x^*)^2}=0
\end{equation}
By equation (\ref{eqxy}) and the condition $\beta>\mu(1+\frac{d_{0}}{\alpha})$ we have
\begin{equation}\label{1+2}
0<\bar{\Lambda}_{1}+\bar{\Lambda}_{2}=\mu+d_{0}+\frac{\alpha}{(1+x^*)^2}<2\ \  (see\ (\ref{parametr})),
\end{equation}
\begin{equation}\label{1*2}
\bar{\Lambda}_{1}\cdot\bar{\Lambda}_{2}=\frac{\alpha}{(1+x^*)^2}\left(\mu\left(1+\frac{d_{0}}{\alpha}(1+x^*)^2\right)-\beta\right)=\frac{\alpha(\beta-\mu)x^*}{(1+x^*)^2}>0.
\end{equation}
 From (\ref{1+2}) and (\ref{1*2}) we obtain $0<\bar{\Lambda}_{1,2}<2.$ Therefore, if $\beta>\mu\left(1+\frac{d_{0}}{\alpha}\right)$ the fixed point $(x^*, y^*)$ is attracting.

Thus we have proven the following proposition.
\begin{pro}\label{type} The type of the fixed points for (\ref{syst}) are as follows:
\begin{itemize}
  \item[i)] if $\beta\leq\mu\left(1+\frac{d_{0}}{\alpha}\right)$,  then the operator (\ref{syst}) has unique fixed point $(0,0)$, the point
  $$(0,0)=\left\{\begin{array}{lll}
  attracting, \ \ \mbox{if} \ \ \beta<\mu\left(1+\frac{d_{0}}{\alpha}\right) \\[2mm]
  non-hyperbolic, \ \ \mbox{if} \ \  \beta=\mu\left(1+\frac{d_{0}}{\alpha}\right).
  \end{array}\right.$$
\item[ii)]if \ $\beta>\mu\left(1+\frac{d_{0}}{\alpha}\right)$, then the operator has two fixed points $(0,0)$, $(x^*,y^*)$, and the point $(x^*,y^*)$ is attracting,  the point
  $$(0,0)=\left\{\begin{array}{lll}
  repelling, \ \ \mbox{if} \ \ \beta>\mu\left(1+\frac{d_{0}}{\alpha}\right)+\alpha^* \\[2mm]
  saddle, \ \ \mbox{if} \ \ \mu\left(1+\frac{d_{0}}{\alpha}\right)<\beta<\mu\left(1+\frac{d_{0}}{\alpha}\right)+\alpha^* \\[2mm]
  non-hyperbolic, \ \ \mbox{if} \ \  \beta=\mu\left(1+\frac{d_{0}}{\alpha}\right)+\alpha^*.
  \end{array}\right.$$
  \end{itemize}
where $\alpha^*=\frac{1}{\alpha}\left(4-2(\alpha+\mu+d_{0})\right).$
\end{pro}

\subsection{The dynamics of (\ref{syst}).}\

The following theorem  describes the trajectory of any initial point $(x^{(0)}, y^{(0)})$ in $\mathbb{R}^2_{+}$.

\begin{thm}\label{pr} For the operator $W_{0}$ given by (\ref{syst}) (i.e. under condition (\ref{parametr}))
and for any initial point $(x^{(0)}, y^{(0)})\in \mathbb R^2_+$ the following hold:
\begin{itemize}
\item[(i)] If $y^{(n)}>\frac{\alpha}{\mu}$ for any natural number $n$ then $$\lim\limits_{n\to \infty}x^{(n)}=+\infty, \ \lim\limits_{n\to \infty}y^{(n)}=\frac{\alpha}{\mu};$$
\item[(ii)] If there exists $n_{0}$ number such that  $y^{(n_{0})}\leq\frac{\alpha}{\mu}$ and $\beta<\mu\left(1+\frac{d_{0}}{\alpha}\right)$ then $$\lim\limits_{n\to \infty}x^{(n)}=0, \ \lim\limits_{n\to \infty}y^{(n)}=0,$$
\end{itemize}
where $(x^{(n)}, y^{(n)})=W_0^n(x^{(0)}, y^{(0)})$, with $W_{0}^n$ is $n$-th iteration of $W_{0}$.
\end{thm}
\begin{proof} We have
\begin{equation}\label{x^ny^n} x^{(n)}=\beta y^{(n-1)}-\frac{\alpha x^{(n-1)}}{1+x^{(n-1)}}-d_{0}x^{(n-1)}+x^{(n-1)},\ y^{(n)}=\frac{\alpha x^{(n-1)}}{1+x^{(n-1)}}-\mu y^{(n-1)}+y^{(n-1)}.\end{equation}
First, we prove the assertion $(i).$ Let all values of $y^{(n)}$ are greater than $\frac{\alpha}{\mu}$. Then
$$y^{(n+1)}-y^{(n)}=\frac{\alpha x^{(n)}}{1+x^{(n)}}-\mu y^{(n)}<\alpha-\mu y^{(n)}=\mu(\frac{\alpha}{\mu}-y^{(n)})<0.$$ So $y^{(n)}$ is a decreasing sequence. Since  $y^{(n)}$ is decreasing and bounded from below we have:
\begin{equation}\label{y[n]geq}
\lim_{n\to \infty}y^{(n)}\geq\frac{\alpha}{\mu}.
\end{equation}
We estimate $y^{(n)}$ by the following:
 $$y^{(n)}=\frac{\alpha x^{(n-1)}}{1+x^{(n-1)}}+(1-\mu)y^{(n-1)}<\alpha+(1-\mu)y^{(n-1)}<\alpha+(1-\mu)(\alpha+(1-\mu)y^{(n-2)})$$
 $$<\alpha+\alpha(1-\mu)+(1-\mu)^2(\alpha+(1-\mu)y^{(n-3)})< ...<\alpha+\alpha(1-\mu)+\alpha(1-\mu)^2+...+\alpha(1-\mu)^{n-1}$$ $$+(1-\mu)^{n}y^{(0)}=\frac{\alpha}{\mu}+(1-\mu)^{n}(y^{(0)}-\frac{\alpha}{\mu}).$$
Thus $y^{(n)}<\frac{\alpha}{\mu}+(1-\mu)^{n}(y^{(0)}-\frac{\alpha}{\mu})$. Consequently
\begin{equation}\label{y[n]leq}
\lim_{n\to \infty}y^{(n)}\leq\frac{\alpha}{\mu}.
\end{equation}
 By (\ref{y[n]geq}) and (\ref{y[n]leq}) we have \begin{equation}\label{y[n]=a}
 \lim\limits_{n\to \infty}y^{(n)}=\frac{\alpha}{\mu}.
\end{equation}
From (\ref{y[n]=a}) and (\ref{x^ny^n}) it follows $\lim\limits_{n\to \infty}x^{(n)}=+\infty.$

Let's prove the assertion $(ii)$.
If $y^{(n-1)}\leq\frac{\alpha}{\mu}$ then $y^{(n)}<\frac{\alpha}{\mu}$. Indeed,
  $$y^{(n)}=(1-\mu)y^{(n-1)}+\frac{\alpha x^{(n-1)}}{1+x^{(n-1)}}\leq(1-\mu)\frac{\alpha}{\mu}+\frac{\alpha x^{(n-1)}}{1+x^{(n-1)}}$$
  $$=\frac{\alpha}{\mu}-\alpha(1-\frac{x^{(n-1)}}{1+x^{(n-1)}})<\frac{\alpha}{\mu}.$$
There exists $n_0$ such that the sequence $y^{(n)}$  is  less than $\frac{\alpha}{\mu}$ for $n> n_0$.

The proof is based on the following lemmas.
\begin{lemma}\label{xnborder}
There exists $m_0$ such that the sequence $x^{(n)}$  is  less than $\frac{\alpha\beta}{\mu d_{0}}$ for $n> m_0$.
\end{lemma}
\begin{proof}
Assume that all values of $x^{(n)}$ are not less than $\frac{\alpha\beta}{\mu d_{0}}$. Then
$$x^{(n+1)}-x^{(n)}=\beta y^{(n)}-\frac{\alpha x^{(n)}}{1+x^{(n)}}-d_{0}x^{(n)}<\beta\cdot\frac{\alpha}{\mu}-d_{0} x^{(n)}=d_{0}\left(\frac{\alpha\beta}{\mu d_{0}}-x^{(n)}\right)\leq0.$$ So $x^{(n)}$ is a decreasing sequence. Since  $x^{(n)}$ is decreasing and bounded from below we have:
\begin{equation}\label{x[n]>a}
\lim_{n\to \infty}x^{(n)}\geq\frac{\alpha\beta}{\mu d_{0}}.
\end{equation}
We estimate $x^{(n)}$ by the following:
$$x^{(n)}=\beta y^{(n-1)}-\frac{\alpha x^{(n-1)}}{1+x^{(n-1)}}-d_{0}x^{(n-1)}+x^{(n-1)}<\frac{\alpha\beta}{\mu}+(1-d_{0})x^{(n-1)}<$$
$$<\frac{\alpha\beta}{\mu}+(1-d_{0})\left(\frac{\alpha\beta}{\mu}+(1-d_{0})x^{(n-2)}\right)<$$
$$<...<\frac{\alpha\beta}{\mu}+\frac{\alpha\beta}{\mu}(1-d_{0})+\frac{\alpha\beta}{\mu}(1-d_{0})^2+...+\frac{\alpha\beta}{\mu}(1-d_{0})^{n-1}+(1-d_{0})^{n}x^{(0)}=$$
$$=\frac{\alpha\beta}{\mu d_{0}}+(1-d_{0})^{n}\left(x^{(0)}-\frac{\alpha\beta}{\mu d_{0}}\right).$$
Thus $x^{(n)}<\frac{\alpha\beta}{\mu d_{0}}+(1-d_{0})^{n}\left(x^{(0)}-\frac{\alpha\beta}{\mu d_{0}}\right).$ From condition (\ref{parametr}) it follows $d_{0}\in(0, 1).$ Consequently
\begin{equation}\label{x[n]<a}
\lim_{n\to \infty}x^{(n)}\leq\frac{\alpha\beta}{\mu d_{0}}.
\end{equation}
 By (\ref{x[n]>a}) and (\ref{x[n]<a}) we have
 \begin{equation}\label{x[n]=a}
 \lim\limits_{n\to \infty}x^{(n)}=\frac{\alpha\beta}{\mu d_{0}}.
\end{equation}
Existing the limit of the sequence $x^{(n)}$ and  (\ref{x^ny^n}) derive the existence of the limit of the sequence $y^{(n)}$. By (\ref{x^ny^n}) we have
 \begin{equation}\label{x[n]=b}
 \lim\limits_{n\to \infty}x^{(n)}=\frac{\alpha(\beta-\mu)}{\mu d_{0}}-1.
\end{equation}
By (\ref{x[n]=a}) and (\ref{x[n]=b}) our assumption is false. Hence, for all $n>m_{0}$, there exists $m_0$ such that $x^{(n)}$  is less than $\frac{\alpha\beta}{\mu d_{0}}$.

If $x^{(n-1)}<\frac{\alpha\beta}{\mu d_{0}}$ then $x^{(n)}<\frac{\alpha\beta}{\mu d_{0}}$. Indeed,
 $$x^{(n)}=\beta y^{(n-1)}-\frac{\alpha x^{(n-1)}}{1+x^{(n-1)}}-d_{0}x^{(n-1)}+x^{(n-1)}<\frac{\alpha\beta}{\mu}+(1-d_{0})x^{(n-1)}<$$
 $$<\frac{\alpha\beta}{\mu}+(1-d_{0})\frac{\alpha\beta}{\mu d_{0}}=\frac{\alpha\beta}{\mu d_{0}}.$$
\end{proof}

\begin{lemma}\label{border} The sequences $x^{(n)}$ and $y^{(n)}$ are bounded.\end{lemma}
\begin{proof} It is obvious to see that the sequence $x^{(n)}$ is bounded due to Lemma \ref{xnborder}.

For the sequence $y^{(n)}$ the inequality $y^{(n)}\leq\frac{\alpha}{\mu}+(1-\mu)^{n}(y^{(0)}-\frac{\alpha}{\mu})$ holds.
Thus if the initial point $y^{(0)}>\frac{\alpha}{\mu}$ then for any $m\in\mathbb{N}$ we have $0\leq y^{(m)}\leq y^{(0)}$. Moreover, if $y^{(0)}<\frac{\alpha}{\mu}$ then for any $m\in\mathbb{N}$ we have $0\leq y^{(m)}\leq \frac{\alpha}{\mu}.$
\end{proof}
Let
 \begin{equation}\label{beta<mu}
 \beta<\mu(1+\frac{d_{0}}{\alpha}).
 \end{equation}
\begin{lemma}\label{sequence} For sequences $x^{(n)}$ and $y^{(n)}$ under condition (\ref{beta<mu})  the following statements hold:
\begin{itemize}
  \item[1)] For any $m\in\mathbb{N}$ the inequalities
   $x^{(m)}<x^{(m+1)}$ and $y^{(m)}<y^{(m+1)}$ can not be satisfied at the same time;
  \item[2)] If  $x^{(m-1)}>x^{(m)}$, $y^{(m-1)}>y^{(m)}$ for some $m\in\mathbb{N}$ then $x^{(m)}>x^{(m+1)}$, $y^{(m)}>y^{(m+1)};$
  \item[3)] For any $m\in\mathbb{N}$ the inequalities $x^{(m)}>x^{(m+1)}$, $y^{(m)}<y^{(m+1)}$ can not be satisfied at the same time;
  \item[4)] For any $m\in\mathbb{N}$ the inequalities $x^{(m)}<x^{(m+1)}$, $y^{(m)}>y^{(m+1)}$ can not be satisfied at the same time;
 \item[5)] For any $m\in\mathbb{N}$ the inequalities $x^{(m-1)}<x^{(m)}, x^{(m)}>x^{(m+1)}, y^{(m-1)}>y^{(m)}, y^{(m)}<y^{(m+1)}$  can not be satisfied at the same time;
 \item[6)] For any $m\in\mathbb{N}$ the inequalities $x^{(m-1)}>x^{(m)}, x^{(m)}<x^{(m+1)}, y^{(m-1)}<y^{(m)}, y^{(m)}>y^{(m+1)}$  can not be satisfied at the same time.
\end{itemize}
\end{lemma}
\begin{proof}
\begin{itemize}
  \item[1)] Assume for any $m\in\mathbb{N}$ and condition (\ref{beta<mu}) the inequalities
   $x^{(m)}<x^{(m+1)}$ and $y^{(m)}<y^{(m+1)}$ are satisfied at the same time. Then since $x^{(m)}$ and $y^{(m)}$ are increasing and bounded (see Lemma \ref{border}) there exist their limits $x^*\neq0$, $y^*\neq0$ respectively. By (\ref{syst}) and (\ref{beta<mu}) we obtain $x^*=0, y^*=0.$ This contradiction shows that if for any $m\in\mathbb{N}$ and condition (\ref{beta<mu}) the inequalities $x^{(m)}<x^{(m+1)}$ and $y^{(m)}<y^{(m+1)}$ can not be satisfied at the same time.
  \item[2)] Since the function $u(x)=x\left(1-d_{0}-\frac{\alpha}{1+x}\right)$ is monotonically increasing $(x\geq0)$ and by $x^{(m-1)}-x^{(m)}>0$ we have
  $$x^{(m-1)}\left(1-d_{0}-\frac{\alpha}{1+x^{(m-1)}}\right)-x^{(m)}\left(1-d_{0}-\frac{\alpha}{1+x^{(m)}}\right)>0$$ and
  $$\frac{x^{(m-1)}}{1+x^{(m-1)}}>\frac{x^{(m)}}{1+x^{(m)}}.$$
 Then
 $$x^{(m)}-x^{(m+1)}=\beta(y^{(m-1)}-y^{(m)})+x^{(m-1)}\left(1-d_{0}-\frac{\alpha}{1+x^{(m-1)}}\right)-$$$$-x^{(m)}\left(1-d_{0}-\frac{\alpha}{1+x^{(m)}}\right)>0,$$ $$y^{(m)}-y^{(m+1)}=\alpha\left(\frac{x^{(m-1)}}{1+x^{(m-1)}}-\frac{x^{(m)}}{1+x^{(m)}}\right)+(1-\mu)(y^{(m-1)}-y^{(m)})>0.$$
  \item[3)] Assume  under  condition (\ref{beta<mu}) the inequalities  $x^{(m)}>x^{(m+1)}$, $y^{(m)}<y^{(m+1)}$ are satisfied at the same time.
      Then since $x^{(n)}$ is decreasing and bounded; $y^{(n)}$ is increasing and bounded (see Lemma \ref{border}) there exist their limits $x^*$, $y^*\neq0$ respectively. By (\ref{syst}) we obtain
$$\lim_{m\rightarrow\infty}x^{(m)}=0, \ \ \lim_{m\rightarrow\infty}y^{(m)}=0.$$
This contradiction shows that for  condition (\ref{beta<mu}) the inequalities $x^{(m)}>x^{(m+1)}$, $y^{(m)}<y^{(m+1)}$ can not be satisfied at the same time.
    \item[4)] Assume under condition (\ref{beta<mu}) the inequalities $x^{(m)}<x^{(m+1)}$, $y^{(m)}>y^{(m+1)}$ are satisfied at the same time, i.e. $x^{(m)}$ is increasing and $y^{(m)}$ is decreasing. Thus from Lemma \ref{border} we conclude that $x^{(m)}$ and $y^{(m)}$ has a finite limit. By (\ref{syst}) we have
$$\lim_{m\rightarrow\infty}x^{(m)}=0, \ \ \lim_{m\rightarrow\infty}y^{(m)}=0.$$
But this is a contradiction to $\lim\limits_{m\rightarrow\infty}x^{(m)}\neq0$. This completes proof of part 4.
    \item[5)] Assume for any $m\in\mathbb{N}$ one has  $x^{(m-1)}<x^{(m)}, x^{(m)}>x^{(m+1)}, y^{(m-1)}>y^{(m)}, y^{(m)}<y^{(m+1)}$. Then $x^{(m-1)}<x^{(m+1)}, y^{(m-1)}>y^{(m+1)}$. Moreover, for each $k\in\mathbb{N}$ for  $m=2k+1$ we have  $x^{(2k-2)}<x^{(2k)}, y^{(2k-2)}>y^{(2k+2)}$, and for $m=2k$ we have $x^{(2k-1)}>x^{(2k+1)}, y^{(2k-1)}<y^{(2k+1)}$. But by Lemma \ref{sequence} parts 3) and 4) these inequalities do not hold for any $k\in\mathbb{N}$ (Example See Fig \ref{fig.1}).
    \item[6)] Similar to the proof of part 5.
\end{itemize}
\end{proof}

\begin{figure}[h!]
\includegraphics[width=0.7\textwidth]{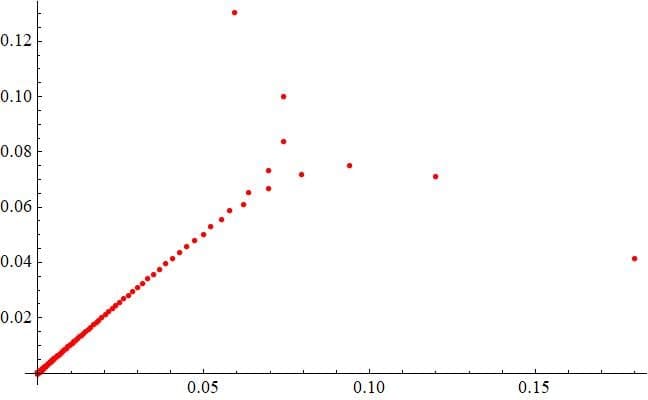}\\
\caption{$\alpha=0.8,\ \beta=0.9,\ \mu=0.8,\ d_{0}=0.2,\ x^{(0)}=0.002,\ y^{(0)}=0.2$}\label{fig.1}
\end{figure}

\begin{lemma}\label{monoton} If $\beta<\mu(1+\frac{d_{0}}{\alpha})$ then for all $n>n_{0}$, there exists  $n_0$ such that the sequences $x^{(n)}$ and $y^{(n)}$  are decreasing.
\end{lemma}
\begin{proof}  Easily deduced from Lemma \ref{sequence}.
\end{proof}
Now we continue to prove assertion (ii) of the theorem.

Thus for condition (\ref{beta<mu}) the sequences $x^{(n)}$ and $y^{(n)}$ has limits(see Lemma \ref{border} and Lemma \ref{monoton}).
Consequently, by (\ref{syst}) we get $\lim\limits_{n\rightarrow\infty}x^{(n)}=0$, $\lim\limits_{n\rightarrow\infty}y^{(n)}=0$.
\end{proof}
\begin{rk} For the case $\beta\geq\mu(1+\frac{d_{0}}{\alpha})$ the global behaviour of the fixed points is not studied.
\end{rk}

\section {A normalized (\ref{syst}) opertor}
In this section we consider the normalized version of the operator (\ref{syst}), i.e.,
\begin{equation}\label{systemnormal}
U:\left\{%
\begin{array}{ll}
    x'=\frac{(\beta y+(1-d_{0})x)(1+x)-\alpha x}{(1+x)((\beta-\mu+1)y+(1-d_{0})x)},\\[2mm]
    y'=\frac{\alpha x+(1+x)(1-\mu)y}{(1+x)((\beta-\mu+1)y+(1-d_{0})x)}.
\end{array}%
\right.\end{equation}

Denote $$S=\{(x,y)\in\mathbb{R}^2_{+}:x+y=1\}.$$
 From conditions (\ref{parametr}) one gets $x'\geq0$ and $y'\geq0$ moreover $x'+y'=1.$
Hence $U:S\rightarrow S.$

Using $x+y=1$, from (\ref{systemnormal}) we get
\begin{equation}\label{T}
T:x'=\frac{(1-d_{0}-\beta)x^2+(1-d_{0}-\alpha)x+\beta}{(\mu-\beta-d_{0})x^2+(1-d_{0})x+\beta-\mu+1}.
\end{equation}
Denote $$S^*=[0,1].$$

\begin{pro} If conditions (\ref{parametr}) are satisfied then the function $T$ (defined by (\ref{T}))  maps $S^*$ to itself.
\end{pro}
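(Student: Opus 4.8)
The plan is to write $T(x)=P(x)/Q(x)$ with numerator $P(x)=(1-d_{0}-\beta)x^2+(1-d_{0}-\alpha)x+\beta$ and denominator $Q(x)=(\mu-\beta-d_{0})x^2+(1-d_{0})x+\beta-\mu+1$, and then to verify the three facts that together give $0\le T(x)\le 1$ on $S^*=[0,1]$: that $Q(x)>0$, that $P(x)\ge 0$, and that $Q(x)-P(x)\ge 0$. Since $S^*=[0,1]$, establishing these three inequalities for all $x\in[0,1]$ is exactly the assertion that $T$ maps $S^*$ into itself.

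The key computation, which I would carry out first, is the difference $Q(x)-P(x)$. The quadratic coefficients subtract to $(\mu-\beta-d_{0})-(1-d_{0}-\beta)=\mu-1$, the linear coefficients to $(1-d_{0})-(1-d_{0}-\alpha)=\alpha$, and the constants to $(\beta-\mu+1)-\beta=1-\mu$, so that $Q(x)-P(x)=(1-\mu)(1-x^2)+\alpha x$. Under conditions (\ref{parametr}) one has $1-\mu\ge 0$ and $\alpha>0$, hence for $x\in[0,1]$ each term is nonnegative and $Q(x)-P(x)\ge 0$. This immediately yields $T(x)\le 1$, provided the denominator is positive.

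Next I would establish $P(x)>0$ on $[0,1]$ by an endpoint-plus-monotonicity argument. At the endpoints $P(0)=\beta>0$ and $P(1)=2(1-d_{0})-\alpha\ge\alpha>0$, where the last bound uses $\alpha+d_{0}\le 1$ (so $1-d_{0}\ge\alpha$). A quadratic that is positive at both endpoints can only dip negative inside the interval if it opens upward with an interior minimum; but when $1-d_{0}-\beta>0$ the vertex lies at $x_{v}=-(1-d_{0}-\alpha)/\big(2(1-d_{0}-\beta)\big)\le 0$, using $1-d_{0}-\alpha\ge 0$, so $P$ is increasing on $[0,1]$ and bounded below by $P(0)=\beta>0$. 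The remaining cases $1-d_{0}-\beta=0$ (then $P$ is affine with nonnegative slope) and $1-d_{0}-\beta<0$ (then $P$ opens downward, so its minimum on $[0,1]$ is attained at an endpoint) are immediate. Hence $P(x)>0$ throughout $[0,1]$.

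Finally, positivity of the denominator comes for free: writing $Q(x)=P(x)+\big(Q(x)-P(x)\big)$, both summands are nonnegative on $[0,1]$ with $P(x)>0$, so $Q(x)>0$ and $T$ is well defined there. Combining $P(x)>0$, $Q(x)>0$, and $Q(x)-P(x)\ge 0$ gives $0\le T(x)\le 1$ for every $x\in[0,1]$, which is the claim. I expect no genuine obstacle; the only step requiring care is the sign analysis of $P$, since its leading coefficient $1-d_{0}-\beta$ changes sign as $\beta$ ranges over $(0,\infty)$, so the vertex/monotonicity discussion must treat all three cases. As a consistency check, one may note that $T$ is precisely the restriction of $U$ to $S$ under the substitution $y=1-x$ — the factor $(1+x)$ in the denominator of (\ref{systemnormal}) multiplies out to reproduce $Q$ — so the conclusion is consistent with the already-observed property $U\colon S\to S$.
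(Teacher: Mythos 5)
Your proof is correct and follows essentially the same route as the paper: both write $T$ as the ratio of the numerator $P=h_{1}$ and denominator $Q=h_{2}$ and reduce the claim to the three inequalities $P\ge 0$, $Q>0$, and $Q-P\ge 0$ on $[0,1]$. The only divergence is in the elementary verifications: the paper gets $P\ge 0$ by regrouping it as $(1-d_{0})x^{2}+\beta(1-x^{2})+(1-d_{0}-\alpha)x$ and gets $Q>0$ from the factorization $Q=(1+x)\bigl((\beta-\mu+1)(1-x)+(1-d_{0})x\bigr)$, whereas you run a vertex/case analysis for $P$ and deduce $Q>0$ from $Q=P+(Q-P)$; both are valid, and your termwise bound $Q-P=(1-\mu)(1-x^{2})+\alpha x\ge 0$ even handles the boundary case $\mu=1$ a little more cleanly than the paper's endpoint check.
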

\begin{proof} We want to show that if $x\in S^*$ then $x'=T(x)\in S^*.$
Let $$h_{1}=(1-d_{0}-\beta)x^2+(1-d_{0}-\alpha)x+\beta,\ h_{2}=(\mu-\beta-d_{0})x^2+(1-d_{0})x+\beta-\mu+1$$
Then $h(x)=h_{2}-h_{1}=(\mu-1)x^2+\alpha x+1-\mu.$
Since $h(0)=1-\mu>0$ and $h(1)=\alpha>0$ for each $x\in[0,1]$ we have $h(x)\geq0,$ i.e., $x'\leq1.$ By
$$h_{1}=(1-d_{0}-\beta)x^2+(1-d_{0}-\alpha)x+\beta=(1-d_{0})x^2+\beta (1-x^2)+(1-d_{0}-\alpha)x\geq0,$$ $$h_{2}=(\mu-\beta-d_{0})x^2+(1-d_{0})x+\beta-\mu+1=(1+x)((\beta-\mu+1)(1-x)+(1-d_{0})x)>0$$
we have $x'\geq0,\ y'\geq0.$ Therefore under conditions (\ref{parametr}) we get $T:S^*\mapsto S^*$.
\end{proof}

\subsection{Fixed points.}\
For fixed point of $T$ the following lemma holds.
\begin{lemma}\label{lemmafix} (\ref{T}) has unique fixed point $x^*$.
\begin{equation}\label{efix}
x^*=\left\{%
\begin{array}{ll}
    \frac{\sqrt{\alpha^2+4\beta^2}-\alpha}{2\beta}, \ \ \ \ \ \ \ \ \ \ \ \ \ \ \ \ \ \ \ \ \ \ \ \ \ \ \ \ if \ \beta+d_{0}=\mu  \\[2mm]
    \frac{\sigma(\beta^2-3(\alpha-\mu+\beta+d_{0})(\mu-\beta-d_{0}))+1/\sigma-\beta}{3(\mu-\beta-d_{0})},\ \ \ if \ \beta+d_{0}\neq\mu.
\end{array}%
\right.\end{equation}
where $\sigma=\sqrt[3]{2/(9\beta e(\alpha+2e)-2\beta^3+\sqrt{4(-\beta^2+3e(\alpha-e))^3+(9\beta e(\alpha+2e)-2\beta^3)^2})},$
$e=\mu-\beta-d_{0}.$
\end{lemma}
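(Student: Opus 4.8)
The plan is to clear denominators in the fixed-point equation $T(x)=x$ and reduce it to a polynomial equation. Since the preceding proposition shows that the denominator $h_2=(\mu-\beta-d_0)x^2+(1-d_0)x+\beta-\mu+1$ is strictly positive on $S^*=[0,1]$, the fixed points of $T$ in $[0,1]$ are exactly the roots in $[0,1]$ of $h_1-xh_2$. Writing $e=\mu-\beta-d_0$ and expanding, I would obtain the cubic
\[
P(x):=e\,x^3+\beta x^2+(\alpha-e)x-\beta=0 ,
\]
where $\alpha-e=\alpha-\mu+\beta+d_0$. Thus the whole lemma amounts to showing that $P$ has exactly one root in $[0,1]$ and then writing that root explicitly.

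For existence and uniqueness I would first note $P(0)=-\beta<0$ and $P(1)=\alpha>0$, so the intermediate value theorem already yields at least one root in $(0,1)$; in particular $0$ and $1$ are never roots. If $\beta+d_0=\mu$, i.e. $e=0$, then $P$ degenerates to the quadratic $\beta x^2+\alpha x-\beta=0$, whose product of roots equals $-1<0$; hence it has a single positive root, necessarily the one in $(0,1)$, and solving it gives the first line of (\ref{efix}). If $e\neq0$ I would invoke Descartes' rule of signs. For $e>0$ the coefficient sequence of $P$ has exactly one sign change (in every sub-case of the sign of $\alpha-e$), so $P$ has exactly one positive root, which must be the root already located in $(0,1)$. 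For $e<0$ the sequence has two sign changes, so there are $0$ or $2$ positive roots; the existence argument rules out $0$, and then examining the factorization $P(x)=e(x-r_0)(x-r_1)(x-r_2)$ together with $P(0)<0$ forces the third root $r_0$ to be negative, while $P(1)=\alpha>0$ forces exactly one of the two positive roots to lie below $1$ and the other above $1$. In every case $P$ has a unique root in $[0,1]$.

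For the explicit formula when $e\neq0$ I would apply Cardano's method to $P$. Setting $\Delta_0=\beta^2-3e(\alpha-e)$ and $\Delta_1=2\beta^3-9e\beta(\alpha+2e)$—note that $\Delta_0$ is precisely the bracket $\beta^2-3(\alpha-\mu+\beta+d_0)(\mu-\beta-d_0)$ appearing in (\ref{efix}) and that $9\beta e(\alpha+2e)-2\beta^3=-\Delta_1$—the quantity $\sigma$ is a cube root of $2/(-\Delta_1+\sqrt{\Delta_1^2-4\Delta_0^3})$, and a short computation gives $\sigma\Delta_0=-C$ and $1/\sigma=-\Delta_0/C$ for the Cardano cube root $C=\sqrt[3]{(\Delta_1+\sqrt{\Delta_1^2-4\Delta_0^3})/2}$. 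The general Cardano root $-\tfrac{1}{3e}(\beta+C+\Delta_0/C)$ then rearranges to $(\sigma\Delta_0+1/\sigma-\beta)/(3e)$, which is the second line of (\ref{efix}). The main obstacle, and the only genuinely delicate point, is the branch bookkeeping: I would have to verify that the prescribed cube root $\sigma$ selects exactly the unique real root lying in $[0,1]$ and not one of the other two roots. This is subtle because for $e<0$ the cubic has three real roots (casus irreducibilis), so $\Delta_1^2-4\Delta_0^3$ is then negative and the radicals defining $\sigma$ pass through the complex numbers even though the final value is real; pinning down the correct branch so that the formula returns the root in $(0,1)$ is where the care is required.
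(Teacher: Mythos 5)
Your reduction to the cubic $e\,x^3+\beta x^2+(\alpha-e)x-\beta=0$ and your treatment of the degenerate case $e=0$ coincide with the paper's equation (\ref{fixT}) and its quadratic sub-case, so the skeleton of your argument is the same as the paper's. The one genuine difference is the root-counting step for $e\neq 0$: the paper applies the Budan--Fourier theorem, comparing the sign-variation counts $N(0)$ and $N(1)$ of the sequence $(l,l',l'',l''')$ to get the unique root in $(0,1)$ in one stroke, whereas you use Descartes' rule of signs on the coefficient sequence together with $P(0)=-\beta<0$, $P(1)=\alpha>0$, and, for $e<0$, a factorization argument showing that the third root is negative and that exactly one of the two positive roots lies below $1$. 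Your route is more elementary (it avoids quoting Budan--Fourier) at the cost of a short case analysis, and it is complete and correct. On the explicit formula, both you and the paper stop short of actually verifying that the displayed Cardano expression, with the particular cube root $\sigma$, selects the root lying in $[0,1]$ rather than one of the other two roots --- the paper simply asserts that the root ``has the form (\ref{efix})'' --- so your candid flagging of the branch-selection issue (in particular the casus irreducibilis when $e<0$, where three real roots exist and the radicals pass through the complex numbers) points to a gap that is present in the original proof as well, not one you introduced.
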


\begin{proof}
Finding fixed points of function (\ref{T}) leads to the following equation.
\begin{equation}\label{fixT}
(\mu-\beta-d_{0})x^3+\beta x^2+(\beta-\mu+d_{0}+\alpha)x-\beta=0.
\end{equation}
If $\beta+d_{0}=\mu$ then $x^*_{1,2}=\frac{-\alpha\pm\sqrt{\alpha^2+4\beta^2}}{2\beta}.$ Since these roots should be in $S^*$, one can check that $x^*_{1}=\frac{-\alpha+\sqrt{\alpha^2+4\beta^2}}{2\beta}\in S^*,\ x^*_{2}=\frac{-\alpha-\sqrt{\alpha^2+4\beta^2}}{2\beta}\notin S^*.$

Let $\beta+d_{0}\neq\mu.$ Consider
$$l(x)=(\mu-\beta-d_{0})x^3+\beta x^2+(\beta-\mu+d_{0}+\alpha)x-\beta,$$
$$l'(x)=3(\mu-\beta-d_{0})x^2+2\beta x+\beta-\mu+d_{0}+\alpha,$$
$$l''(x)=6(\mu-\beta-d_{0})x+2\beta,$$
$$l'''(x)=6(\mu-\beta-d_{0}).$$
By $N(c)$ we denote the number of variations in sign in the ordered sequence of numberes
$$l(c), l'(c), l''(c), l'''(c).$$
If $\mu-\beta-d_{0}>0$ then $N(0)=1$ and $N(1)=0.$ If $\mu-\beta-d_{0}<0$ then $N(0)=2$ and $N(1)=1.$
Thus, according to the Budan-Foruier theorem, equation (\ref{fixT}) has a unique real root in the set $S^*$. It has the form (\ref{efix}).
\end{proof}

\subsection{Types of the fixed point.}\

Suppose $x^*$ is a fixed point for $T$. For one dimensional dynamical systems it is known that $x^*$ is an
attracting fixed point if $|T'(x^*)|<1$. The point $x^*$ is a repelling fixed point if $|T'(x^*)|>1$. Finally, if $|T'(x^*)|=1$, the fixed point is saddle \cite{D}.

Let us calculate the derivative of $T(x)$ at the fixed point $x^*$.

If $\beta+d_{0}=\mu$ then we have
$$T(x)=\frac{1}{1-d_{0}}\left((1-\mu)x+\beta-\alpha+\frac{\alpha}{x+1}\right),$$
$$T'(x)=\frac{1}{1-d_{0}}\left(1-\mu-\frac{\alpha}{(x+1)^2}\right).$$
$$1+x^{*}=1+\frac{-\alpha+\sqrt{\alpha^{2}+4\beta^{2}}}{2\beta}=\frac{2\alpha}{\alpha-2\beta+\sqrt{\alpha^{2}+4\beta^{2}}}.$$
Thus, we obtain
$T'(x^*)=1-\frac{\alpha^{2}+4\beta^{2}+(\alpha-2\beta)\sqrt{\alpha^{2}+4\beta^{2}}}{2\alpha(1-d_{0})}$.

If $|T'(x^{*})|<1$, then we have $$0<\alpha^2+4\beta^2+(\alpha-2\beta)\sqrt{\alpha^2+4\beta^2}<4\alpha(1-d_{0}).$$
If $\alpha\geq2\beta$ then $$0<\alpha^2+4\beta^2+(\alpha-2\beta)\sqrt{\alpha^2+4\beta^2}<\alpha^2+\alpha^2+\alpha\sqrt{\alpha^2+\alpha^2}<4\alpha^2<4\alpha(1-d_{0})$$ under the conditions (\ref{parametr}).
If $\alpha<\beta$ then from the conditions (\ref{parametr}) it follows
   $$0<\alpha^2+4\beta^2+(\alpha-2\beta)\sqrt{\alpha^2+4\beta^2}<(2\beta-\alpha)^2-(2\beta-\alpha)\sqrt{\alpha^2+4\beta^2}+4\alpha\beta=$$
   $$=(2\beta-\alpha)(2\beta-\alpha-\sqrt{\alpha^2+4\beta^2})+
   4\alpha\beta<4\alpha\beta<4\alpha(1-d_{0}).$$
Thus under condition (\ref{parametr}) the inequality $|T'(x^{*})|<1$ is always true.\\
Let $\beta+d_{0}\neq\mu$. Let's look at the derivative of the function $T(x)$. For a fixed point $x^*$ of the function $T(x)$ the following equality holds.
$$x^*=\frac{(1-d_{0}-\beta)x^{*2}+(1-d_{0}-\alpha)x^*+\beta}{(\mu-\beta-d_{0})x^{*2}+(1-d_{0})x^*+\beta-\mu+1} \Rightarrow$$  $$\Rightarrow (1-d_{0}-\beta)x^{*2}+(1-d_{0}-\alpha)x^*+\beta=x^*((\mu-\beta-d_{0})x^{*2}+(1-d_{0})x^*+\beta-\mu+1).$$
We have
\begin{equation}\label{typeT}
T'(x^*)=\frac{-2(\mu-\beta-d_{0})x{^*}^2+(1-d_{0}-2\beta)x^*+1-d_{0}-\alpha}{(\mu-\beta-d_{0})x{^*}^2+(1-d_{0})x^*+\beta-\mu+1}.
\end{equation}
First, we show that the expression (\ref{typeT}) is greater than -1.
From (\ref{typeT}) we get
$$T'(x^*)=-2+\frac{(3-3d_{0}-2\beta)x^*+3-d_{0}-\alpha+2\beta-2\mu}{(1+x^*)((\mu-d_{0}-\beta)x^*+\beta-\mu+1)}>-1+$$ $$+\frac{(3-3d_{0}-2\beta)x^*+3-d_{0}-\alpha+2\beta-2\mu-2((\mu-d_{0}-\beta)x^*+\beta-\mu+1)}
{2((\mu-d_{0}-\beta)x^*+\beta-\mu+1)}=$$ $$-1+\frac{(3-d_{0}-2\mu)x^*+1-d_{0}-\alpha}{2((\mu-d_{0}-\beta)x^*+\beta-\mu+1)}>-1+\frac{1-d_{0}-\alpha}{2((\mu-d_{0}-\beta)x^*+\beta-\mu+1)}>-1.$$

We rewrite (\ref{typeT}) using (\ref{fixT}):
\begin{equation}\label{typeT2}
T'(x^*)=\frac{(1-d_0)x{^*}^2+(1+2\beta-2\mu+d_{0}+\alpha)x^*-2\beta}{(1-d_{0}-\beta)x{^*}^2+(1-d_{0}-\alpha)x^*+\beta}.
\end{equation}
Next, let's show that  (\ref{typeT2}) is less than 1. Let $\mu-\beta-d_{0}>0$. Then
$$T'(x^*)-1=\frac{(1-d_0)x{^*}^2+(1+2\beta-2\mu+d_{0}+\alpha)x^*-2\beta}{(1-d_{0}-\beta)
x{^*}^2+(1-d_{0}-\alpha)x^*+\beta}-$$
$$-\frac{x^*((\mu-\beta-d_{0})x{^*}^2+(1-d_{0})x^*+\beta-\mu+1)}{(1-d_{0}-\beta)x{^*}^2+(1-d_{0}-\alpha)x^*+\beta}=$$
$$=\frac{(\beta-\mu+d_{0}+\alpha)x-2\beta-(\mu-\beta-d_{0})x{^*}^3}{(1-d_{0}-\beta)x{^*}^2+(1-d_{0}-\alpha)x^*+\beta}=$$
$$=\frac{-\beta x{^*}^2-(\mu-\beta-d_{0}) x{^*}^3-(\mu-\beta-d_{0})x{^*}^3-\beta}{(1-d_{0}-\beta)x{^*}^2+(1-d_{0}-\alpha)x^*+\beta}=$$$$=\frac{-\beta(1+x{^*}^2)-2(\mu-\beta-d_{0}) x{^*}^3}{(1-d_{0}-\beta)x{^*}^2+(1-d_{0}-\alpha)x^*+\beta}<0.$$

Let be $\mu-\beta-d_{0}<0$. From (\ref{fixT}) we obtain the inequality $$(\mu-\beta-d_{0})x{^*}^2+\beta x^*+\alpha-\mu+\beta+d_{0}>0.$$ From this inequality and when parameters satisfy conditions (\ref{parametr}) and $\mu-\beta-d_{0}<0$:
\begin{equation}\label{e<0}
3(\mu-\beta-d_{0})x{^*}^2+2\beta x^*+\alpha-\mu+\beta+d_{0}>0.
\end{equation}
From (\ref{typeT}) and (\ref{e<0}) we have the following:
$$T'(x^*)-1=\frac{-3(\mu-\beta-d_{0})x{^*}^2-2\beta x^*-\alpha+\mu-\beta-d_{0}}{(\mu-\beta-d_{0})x{^*}^2+(1-d_{0})x^*+\beta-\mu+1}<0.$$

Thus, if conditions (\ref{parametr}) are satisfied then the absolute value of  (\ref{typeT}) is always less than 1.

Thus for type of $x^{*}$ the following lemma holds.
\begin{lemma}\label{atr} The type of the fixed point $x^{*}$  for (\ref{T}) is attracting.
\end{lemma}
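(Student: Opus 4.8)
The plan is to invoke the standard stability criterion for one-dimensional maps recalled just above the lemma: the fixed point $x^*$ is attracting precisely when $|T'(x^*)|<1$. Thus the entire lemma reduces to verifying this single inequality, and I would organize the verification along the same case split used in Lemma \ref{lemmafix}, namely $\beta+d_{0}=\mu$ versus $\beta+d_{0}\neq\mu$, since the closed form of $x^*$ differs in the two regimes.

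In the degenerate case $\beta+d_{0}=\mu$ the cubic (\ref{fixT}) collapses to a quadratic and $T$ simplifies to an affine-plus-reciprocal expression whose derivative $T'(x)=\frac{1}{1-d_{0}}\bigl(1-\mu-\frac{\alpha}{(x+1)^2}\bigr)$ is immediate. Substituting the explicit root $x^*=\frac{-\alpha+\sqrt{\alpha^2+4\beta^2}}{2\beta}$ (equivalently the computed value of $1+x^*$) produces a closed form for $T'(x^*)$, and $|T'(x^*)|<1$ becomes $0<\alpha^2+4\beta^2+(\alpha-2\beta)\sqrt{\alpha^2+4\beta^2}<4\alpha(1-d_{0})$. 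I would establish this by splitting on the sign of $\alpha-2\beta$: when $\alpha\geq2\beta$ one bounds the left side crudely by $4\alpha^2$, while when $\alpha<2\beta$ one rewrites it as $(2\beta-\alpha)(2\beta-\alpha-\sqrt{\alpha^2+4\beta^2})+4\alpha\beta$ and discards the negative first summand; in both subcases the constraint $\alpha+d_{0}\leq1$ from (\ref{parametr}) closes the estimate.

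In the generic case $\beta+d_{0}\neq\mu$ I would prove the two-sided bound $-1<T'(x^*)<1$ separately. For the lower bound I would keep the rational form (\ref{typeT}), perform the rearrangement that exhibits $T'(x^*)+1$ as $-2$ shifted by a fraction, and use the positivity of the denominator $(1+x^*)((\mu-d_{0}-\beta)x^*+\beta-\mu+1)$ (which factors as the $h_2$ from the earlier proposition) together with $1-d_{0}-\alpha\geq0$ to conclude $T'(x^*)>-1$. For the upper bound the crucial idea is to eliminate the cubic term by substituting the fixed-point relation (\ref{fixT}), converting $T'(x^*)$ into the equivalent expression (\ref{typeT2}); the sign of $T'(x^*)-1$ then becomes transparent after a further split on the sign of $\mu-\beta-d_{0}$, the case $\mu-\beta-d_{0}>0$ reducing to a manifestly negative numerator.

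The main obstacle is the upper bound when $\mu-\beta-d_{0}<0$. There the denominator in (\ref{typeT}) no longer has an obvious sign, so the direct manipulation used for $\mu-\beta-d_{0}>0$ is unavailable. The remedy is to extract from (\ref{fixT}) the auxiliary inequality (\ref{e<0}), namely $3(\mu-\beta-d_{0})x^{*2}+2\beta x^*+\alpha-\mu+\beta+d_{0}>0$; recognizing that this expression is exactly $l'(x^*)$ and that the numerator of $T'(x^*)-1$ equals its negative, and deriving its positivity by combining (\ref{fixT}) with the parameter constraints (\ref{parametr}), is the delicate step. Once (\ref{e<0}) is in hand, $T'(x^*)-1$ is displayed as a ratio with negative numerator and positive denominator, completing the verification that $|T'(x^*)|<1$ and hence that $x^*$ is attracting.
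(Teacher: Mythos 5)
Your proposal follows essentially the same route as the paper: reduce the lemma to $|T'(x^*)|<1$, split on $\beta+d_{0}=\mu$ versus $\beta+d_{0}\neq\mu$, handle the first case by the explicit formula for $T'(x^*)$ with a subcase split on the sign of $\alpha-2\beta$, and handle the second by proving $T'(x^*)>-1$ from (\ref{typeT}) and $T'(x^*)<1$ via the substitution of (\ref{fixT}) into (\ref{typeT2}) together with the auxiliary inequality (\ref{e<0}) when $\mu-\beta-d_{0}<0$. The only difference is cosmetic: your subcases $\alpha\geq 2\beta$ and $\alpha<2\beta$ actually cover the full parameter range, whereas the paper's text splits into $\alpha\geq 2\beta$ and $\alpha<\beta$, leaving the strip $\beta\leq\alpha<2\beta$ to be read as covered by the second computation.
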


\subsection{Periodic  points.}\

A point $z$ in $U$ (see (\ref{systemnormal})) is called periodic point of $U$ if there exists $p$ so that $U^{p}(z)=z$. The smallest positive integer $p$ satisfying $U^{p}(z)=z$ is called the prime period or least period of the point $z.$

\begin{thm}\label{perT} For $p\geq 2$ the operator (\ref{systemnormal}) does not have any $p$-periodic point in the set $\mathbb{R}^{2}_{+}.$
\end{thm}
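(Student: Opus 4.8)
The plan is to push the whole problem down to the one–dimensional map $T$ of (\ref{T}) and then to exclude orbits of period two.

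\emph{Reduction to $T$ on $[0,1]$.} First I would observe that $U$ collapses $\mathbb{R}^{2}_{+}$ onto the segment $S$. Adding the two numerators in (\ref{systemnormal}) yields $(1+x)((1-d_{0})x+(\beta-\mu+1)y)$, which is exactly the common denominator; hence $x'+y'=1$ wherever $U$ is defined, i.e. $\Img U\subseteq S$. Therefore every periodic point $z$ satisfies $z=U^{p}(z)\in\Img U\subseteq S$, so every periodic orbit lies on $S$. Parametrising $S\cap\mathbb{R}^{2}_{+}$ by $x\in S^{*}=[0,1]$ through $(x,1-x)$, the first coordinate of $U|_{S}$ is precisely $T$ (this is how (\ref{T}) was obtained), and the homeomorphism $x\mapsto(x,1-x)$ conjugates $T$ to $U|_{S}$. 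Since the denominator $h_{2}=(1+x)((\beta-\mu+1)(1-x)+(1-d_{0})x)$ is strictly positive on $[0,1]$, the orbit never leaves $S$. Consequently $U$ has a point of prime period $p$ in $\mathbb{R}^{2}_{+}$ if and only if $T$ has a point of prime period $p$ in $[0,1]$, so the theorem reduces to: $T$ has no periodic point of period $\geq 2$.

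\emph{Reduction to period two.} As $T$ is continuous and maps $[0,1]$ into itself, Sharkovskii's theorem applies. Its ordering ends $\dots\triangleright 2^{3}\triangleright 2^{2}\triangleright 2\triangleright 1$, so every $p\geq 2$ dominates $2$; thus a periodic point of any period $p\geq 2$ would force one of period $2$. It therefore suffices to show that $T^{2}(x)=x$ implies $T(x)=x$ on $[0,1]$.

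\emph{Excluding period two.} Writing $T=P/Q$ with $P,Q$ the numerator and denominator of (\ref{T}), and recalling $Q>0$ on $[0,1]$ (the positivity of $h_{2}$), the equation $T^{2}(x)=x$ is equivalent to the polynomial equation $\Phi(x)=0$, where $\Phi(x)=Q(x)^{2}(P(P(x)/Q(x))-x\,Q(P(x)/Q(x)))$ has degree at most $5$. Every fixed point of $T$ solves $\Phi=0$, so the fixed-point cubic $l(x)$ of (\ref{fixT}) divides $\Phi$; writing $\Phi=l\cdot r$ leaves a cofactor $r$ with $\deg r\leq 2$ whose roots in $[0,1]$ are exactly the genuine period-two points. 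It then remains to prove that $r$ has no root in $[0,1]$ under (\ref{parametr}), which I would do by the same sign analysis used earlier for $h_{1},h_{2}$, supplemented by a Budan--Fourier count as in Lemma \ref{lemmafix}. The two reductions above then finish the argument.

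\emph{Main obstacle.} The hard part is the last step: computing $r(x)$ explicitly and verifying $r(x)\neq 0$ throughout $[0,1]$ for every parameter choice allowed by (\ref{parametr}). This is lengthy but routine algebra, parallel to the positivity computations already carried out. An alternative route that avoids it uses the negative Schwarzian derivative of $T$: in the case $\beta+d_{0}=\mu$ one computes from $T(x)=\frac{1}{1-d_{0}}((1-\mu)x+\beta-\alpha+\frac{\alpha}{x+1})$ that $ST(x)=-6ac/(a(x+1)^{2}-c)^{2}<0$ with $a=\frac{1-\mu}{1-d_{0}}$ and $c=\frac{\alpha}{1-d_{0}}$; since $x^{*}$ is the unique fixed point with $|T'(x^{*})|<1$ (Lemma \ref{atr}) and the turning points of $T$ in $(0,1)$ are few, Singer's theorem then forbids any attracting or neutral cycle and gives global attraction of $x^{*}$. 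For this route the obstacle is instead the verification that $ST<0$ in the general case $\beta+d_{0}\neq\mu$.
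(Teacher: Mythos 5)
Your overall architecture is exactly the paper's: reduce to the one-dimensional map $T$ on $S^*$ (since $x'+y'=1$ forces every periodic point onto $S$), use Sharkovskii's theorem to reduce the general period $p\geq 2$ to period $2$, and then kill period $2$ by dividing the fixed-point factor out of $T(T(x))-x$ and showing the quadratic cofactor has no root in $[0,1]$. The problem is that you stop precisely where the proof begins. The entire mathematical content of this theorem is the verification that the quadratic $r(x)=Ax^{2}+Bx+C$ has no zero in $[0,1]$ under the parameter constraints (\ref{parametr}), and you defer it as ``lengthy but routine algebra.'' It is not automatic: the conclusion depends delicately on $\alpha+d_{0}\leq 1$ and $0<\mu\leq 1$. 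The paper computes the coefficients explicitly,
$A=(1-\beta)(\beta-2)+(\beta-\mu+1)(\beta-\mu)+d_{0}(5-2\beta-\mu-2d_{0})$, $B=2(1-d_{0})(\alpha+d_{0}+\mu-2)-\alpha\beta$, $C=(\beta-\mu+1)(\alpha+d_{0}+\mu-2)-\beta(2-\mu-d_{0})$, and checks $B<0$, $C<0$ and $A+B+C=2(1-d_{0})(\alpha+\mu+2d_{0}-3)+(1-\mu)(\alpha+2d_{0}-2)<0$, whence $Ax^{2}+Bx+C\leq (A+B)x+C<0$ on $[0,1]$. Without some version of this computation your argument is a plan, not a proof; a Budan--Fourier count on $r$ would also require knowing the signs of $A$, $B$, $C$ (and of $r$, $r'$ at $0$ and $1$), so you cannot avoid the explicit coefficients.

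Two smaller remarks. First, your reduction to $S$ via $\operatorname{Im}U\subseteq S$ is a slightly cleaner justification than the paper gives for why it suffices to study $T$; that part is fine. Second, your proposed alternative via the Schwarzian derivative and Singer's theorem cannot prove the stated theorem even if the sign of $ST$ were verified in general: Singer's theorem constrains \emph{attracting} (and neutral) cycles by tying them to critical orbits, but it does not exclude repelling periodic orbits, and the theorem asserts that \emph{no} $p$-periodic point exists for $p\geq 2$. So that route should be dropped rather than completed.
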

\begin{proof}
Let us first describe periodic points of $U$ with $p=2$. In this case the equation $U(U(z))=z$
can be reduced to description of 2-periodic points of the function $T$ defined in (\ref{T}), i.e.,to solution of the equation
\begin{equation}\label{per.2}
T(T(x))=x.\end{equation}
Note that the fixed points of $T$ are solutions to (\ref{per.2}), to find other solution we consider the equation
$$\frac{T(T(x))-x}{T(x)-x}=0,$$
simple calculations show that the last equation is equivalent to the following
\begin{equation}\label{PER}
Ax^2+Bx+C=0.\end{equation}
where $$A=(1-\beta)(\beta-2)+(\beta-\mu+1)(\beta-\mu)+d_{0}(5-2\beta-\mu-2d_{0}),$$ $$B=2(1-d_{0})(\alpha+d_{0}+\mu-2)-\alpha\beta,$$ $$C=(\beta-\mu+1)(\alpha+d_{0}+\mu-2)-\beta(2-\mu-d_{0}).$$
By (\ref{parametr}) we have $A+B+C=2(1-d_{0})(\alpha+\mu+2d_{0}-3)+(1-\mu)(\alpha+2d_{0}-2)<0$, $B<0$, $C<0$. Therefore since $x\geq 0$ the LHS of (\ref{PER}) is $<0$.
Consequently, the equation (\ref{PER}) does not have solution in $S^*$.
Thus function (\ref{T}) does not have any 2-periodic point in $S^*$.
Since $T$ is continuous on $S^*$ by Sharkovskii's theorem (\cite{D}, \cite{UR}) we have that
$T^p(x)=x$ does not have solution (except fixed) for all $p\geq 2$.
Hence it follows that the operator (\ref{systemnormal}) has no periodic points (except fixed) in the set $\mathbb{R}^2_{+}.$
\end{proof}

\subsection{The $\omega$-limit set}\

The problem of describing the $\omega$-limit set of a trajectory is of great importance in the theory of dynamical systems.

The following  describes the trajectory of any point $x_{0}$ in $S^*$.
\begin{thm}\label{wset} For the operator $T$ given by (\ref{T}), under condition (\ref{parametr}), for any $x_{0}\in S^*$ the following holds
 $$\lim_{n\to \infty}T^n(x_0)=\left\{\begin{array}{ll}
 \widetilde{x}, \ \ \mbox{if} \ \ \mu=\beta+d_{0}\\[2mm]
 \widehat{x}, \ \ \mbox{if} \ \ \ \mu\neq\beta+d_{0}
 \end{array}\right.$$
 where $T^n$ is $n$-th iteration of $T$, $\widetilde{x}=\frac{\sqrt{\alpha^2+4\beta^2}-\alpha}{2\beta}$, $\widehat{x}=\frac{\sigma(\beta^2-3(\alpha-\mu+\beta+d_{0})(\mu-\beta-d_{0}))+1/\sigma-\beta}{3(\mu-\beta-d_{0})}$ are fixed points.
\end{thm}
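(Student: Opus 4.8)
The plan is to reduce the statement to the single assertion that the unique fixed point $x^*$ of $T$ attracts every point of $S^*=[0,1]$ under iteration; here $x^*=\widetilde{x}$ when $\mu=\beta+d_{0}$ and $x^*=\widehat{x}$ when $\mu\neq\beta+d_{0}$, exactly the two values produced by Lemma \ref{lemmafix}. The ingredients already assembled are precisely what is needed: $T$ is a continuous self-map of the compact interval $S^*$ (the normalization proposition), it has a \emph{unique} fixed point $x^*$ (Lemma \ref{lemmafix}), that fixed point is attracting since $|T'(x^*)|<1$ (Lemma \ref{atr}), and by Theorem \ref{perT} applied with $p=2$ the map $T$ has \emph{no} point of prime period $2$ in $S^*$.

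First I would record the sign behaviour of $g(x)=T(x)-x$ on $[0,1]$. Evaluating at the endpoints gives $T(0)=\beta/(\beta-\mu+1)>0$ and $T(1)=1-\alpha/(2(1-d_{0}))<1$ under conditions (\ref{parametr}), so $g(0)>0$ and $g(1)<0$. Since $x^*$ is the only zero of $g$ in $[0,1]$ by Lemma \ref{lemmafix}, continuity and the intermediate value theorem force $g(x)>0$ on $[0,x^*)$ and $g(x)<0$ on $(x^*,1]$. Hence $T(x)>x$ to the left of $x^*$ and $T(x)<x$ to its right, so every single application of $T$ moves a point toward $x^*$.

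The decisive input is then the absence of period-$2$ points. For a continuous self-map of a compact interval this is exactly the hypothesis of the classical convergence theorem (Coppel; compare the one-dimensional dynamics results cited in \cite{D}, \cite{UR}): if $T$ has no periodic point of prime period $2$, then for every $x_{0}\in S^*$ the orbit $\{T^{n}(x_{0})\}$ converges to a fixed point of $T$. Combined with the uniqueness of the fixed point, this immediately yields $\lim_{n\to\infty}T^{n}(x_{0})=x^*$ for all $x_{0}\in S^*$, which is precisely the claimed dichotomy. If a self-contained argument is preferred, I would instead pass to the second iterate: from the proof of Theorem \ref{perT} the equation $T^{2}(x)=x$ has $x^*$ as its only solution in $S^*$, so $T^{2}(x)-x$ again has constant sign on each side of $x^*$; one would then establish that each of $[0,x^*]$ and $[x^*,1]$ is invariant under $T^{2}$, deduce that the even subsequence $T^{2n}(x_{0})$ is monotone and bounded, hence convergent to a fixed point of $T^{2}$, namely $x^*$, and argue likewise for the shifted subsequence $T^{2n+1}(x_{0})$, so that the whole orbit converges to $x^*$.

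The main obstacle is exactly the point that the period-$2$ hypothesis resolves. The sign of $T(x)-x$ alone does not prevent an orbit from oscillating across $x^*$ without settling, since $T$ need not be monotone and a single step may overshoot $x^*$. Ruling out such a persistent oscillation — equivalently, excluding an asymptotic $2$-cycle or a nondegenerate oscillation in the $\omega$-limit set — is where the no-period-$2$ conclusion of Theorem \ref{perT} must be used, and in the self-contained route it is the invariance of $[0,x^*]$ and $[x^*,1]$ under $T^{2}$ that is the delicate step; once the two-step orbit is shown to be monotone, boundedness makes the convergence automatic.
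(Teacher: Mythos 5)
Your proof is correct, but it takes a genuinely different route from the paper's. The paper argues by an explicit case analysis of the shape of $T$: it computes the critical point $x_{min}$ in each parameter regime ($e=0$, $e=f$, $e\neq f$), then treats separately the cases where $T$ is increasing, decreasing, or unimodal on $S^*$, producing monotone bounded orbits (and, in the decreasing case, passing to the increasing second iterate $T\circ T$, which by Lemma \ref{lemmafix} and Theorem \ref{perT} has a unique fixed point). You bypass all of this by observing that $T$ is a continuous self-map of the compact interval $S^*$ with a unique fixed point and, by Theorem \ref{perT}, no point of prime period $2$, so Coppel's classical theorem yields convergence of every orbit to a fixed point, necessarily $x^*$; the identification of $x^*$ with $\widetilde{x}$ or $\widehat{x}$ is Lemma \ref{lemmafix}. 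Your route is shorter and sidesteps the monotonicity computations as well as the paper's somewhat sketchy subcases a) and b), where convergence after the orbit enters a monotone piece is only asserted; the cost is reliance on Coppel's theorem as a black box, which the paper does not cite --- though the paper itself already invokes Sharkovskii's theorem in Theorem \ref{perT}, so the level of external input is comparable. Your endpoint checks $T(0)=\beta/(\beta-\mu+1)>0$ and $T(1)=1-\alpha/(2(1-d_{0}))<1$ are correct under (\ref{parametr}). One caution on your self-contained alternative: the invariance of $[0,x^*]$ and $[x^*,1]$ under $T^{2}$ is not automatic (a point left of $x^*$ may be sent by $T^{2}$ to the right of $x^*$, after which the even subsequence need not be monotone), so avoiding the citation of Coppel would require the finer bookkeeping of its actual proof; you correctly flag this as the delicate step, so it does not undermine your primary argument.
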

\begin{proof} Let's input the following denotations in the function (\ref{T}):
$$a=1-d_{0}-\beta, b=1-d_{0}-\alpha, e=\mu-\beta-d_{0}, f=\beta-\mu+1.$$

Let $e=0$. Then we write the function (\ref{T}) in the form:
$$T(x)=\frac{1}{1-d_{0}}\left((1-\mu)x+\beta-\alpha+\frac{\alpha}{x+1}\right).$$
Let us find minimum points of $T(x)$. By solving $T'(x)=0$ we have
$x_{min}=\sqrt{\frac{\alpha}{1-\mu}}-1$.
Obviously, the function $T$ is increasing in  $S^*=[0, 1]$ if parameters are $\alpha\leq1-\mu$ and decreasing if $\alpha\geq4(1-\mu)$, and  if $1-\mu<\alpha<4(1-\mu)$ then it is decreasing in $[0, x_{min}]$, increasing in  $[x_{min}, 1]$.

Let's write the function $e=f.$ (\ref{T}) as following
$$T(x)=\frac{1}{e}\left(a-\frac{2(1-\mu)+\alpha}{1+x}+\frac{\alpha}{(1+x)^2}\right).$$
By solving $T'(x)=0$ we have $x_{min}=\frac{\alpha-2(1-\mu)}{\alpha+2(1-\mu)}$.
In this case, it is clear that the function $T$ is increasing in $S^*$ if $\alpha\leq2(1-\mu),$ moreover, it is decreasing in $[0, x_{min}]$ and increasing in $[x_{min}, 1]$ when $\alpha>2(1-\mu).$
Let $e\neq f.$ Then we write th function (\ref{T}) in the form:
$$T(x)=\frac{a}{e}+\frac{\alpha}{(f-e)(1+x)}-\frac{d}{e(f-e)(f+ex)},$$
where $d=\alpha e^2+(f-e)((1-d_{0})(1-\mu)+\alpha e).$
If $d>0, f>e, (1-d_{0})(1-\mu)/f<\alpha<4(1-d_{0})(1-\mu)/(f-e)$ or $d>0, e>f, \alpha>(1-d_{0})(1-\mu)/f$ then the function $T(x)$ reaches a minimum in $S^*$, i.e., $x_{min}=\frac{\sqrt{d}-f\sqrt{\alpha}}{e\sqrt{\alpha}-\sqrt{d}}$, otherwise, the function $T(x)$ is monotone.

\begin{itemize}
\item[1)] \textbf{Case:} $x_{min}\not\in(0, 1).$ (cf. with proof of Lemma 3.4 of \cite{RAU}) $T(x)$ is an increasing function (see Fig.2). Here we consider the case when the function $T$ has unique fixed point $x^{*}.$ We have that the point $x^{*}$ is attractive, i.e., $|T'(x^*)|<1.$ Now we shall take arbitrary $x_{0}\in S^*$ and prove that $x_{n}=T(x_{n-1}),\ n\geq1$ converges as $n\rightarrow\infty.$ Consider the following partition $S^*=[0, 1]=[0, x^{*})\cup\{x^{*}\}\cup(x^{*}, 1].$ For any $x\in[0, x^{*})$ we have $x<T(x)<x^{*},$ since $T$ is an increasing function, from the last inequalities we get $x<T(x)<T^{2}(x)<T(x^{*})=x^{*}$ iterating this argument we obtain $T^{n-1}(1)<T^{n}(x)<x^{*},$ which for any $x_{0}\in[0;x^{*})$ gives $x_{n-1}<x_{n}<x^{*},$ i.e.,$x_{n}$ converges and its limit is a fixed point of $T,$ since $T$ has unique fixed point $x^{*}$ in $[0, x^{*}]$ we conclude that the limit is $x^{*}.$ For $x\in(x^{*};1]$ we have $1>x>T(x)>x^{*},$ consequently $x_{n}>x_{n+1},$ i.e.,\ $x_{n}$ converges and its limit is again $x^{*}.$

$T$ is a decreasing function (see Fig. 3). Let $g(x)=T(T(x)).$ $g$ is increasing since $g'(x)=T'(T(x))T'(x)>0.$ By Lemma \ref{lemmafix} and Theorem \ref{perT} we have that $g$ has at most unique fixed point (including $x^{*}$). Hence one can repeat the same argument of the proof of part $1)$ for the increasing function $g$ and complete the proof.

\textbf{Case:} $x_{min}\in(0, 1).$

a) Let $x_{min}<x^{*}.$ Consider the following partition $[0, 1]=[0, x_{min})\cup[x_{min}, 1].$
  The function $T(x)$ is decreasing in $[0, x_{min})$ and is increasing in $[x_{min}, 1]$ (see Fig. 4). For all $x\in[0, x_{min})$, $x<T(x),\ T(x)>x_{min}.$  For $T(x)\in [x_{min}, 1]$ it can be proved that $x_{n}$ converges to the attractive fixed point $x^{*}$ (see Lemma \ref{atr}) like previous case.

b) Let $x_{min}>x^{*}.$ Consider the following partition $[0, 1]=[0, x_{min})\cup[x_{min}, 1].$
For all $x\in[x_{min}, 1]$, $x>T(x)>T^{2}(x)>...>T^{k}(x),\ T^{k}(x)<x_{min}$ (see Fig. 5).
 If $T(x)\in [0, x_{min})$ then the sequence $x_{n}$ converges to $x^{*}$.
\end{itemize}
This completes the proof.
\end{proof}

\begin{center}
   \begin {tikzpicture} [scale=0.55]
   \draw[->, thick] (0,0) -- (0,10);
   \draw[->, thick] (0,0) -- (11,0);

   \draw[] (1,-0.1) -- (1,0.1);
   \draw[] (2,-0.1) -- (2,0.1);
   \draw[] (3,-0.1) -- (3,0.1);
   \draw[] (4,-0.1) -- (4,0.1);
   \draw[] (5,-0.1) -- (5,0.1);
   \draw[] (6,-0.1) -- (6,0.1);
   \draw[] (7,-0.1) -- (7,0.1);
   \draw[] (8,-0.1) -- (8,0.1);
   \draw[] (9,-0.1) -- (9,0.1);
   \draw[] (10,-0.1) -- (10,0.1);

   \draw[] (-0.1,1) -- (0.1,1);
   \draw[] (-0.1,2) -- (0.1,2);
   \draw[] (-0.1,3) -- (0.1,3);
   \draw[] (-0.1,4) -- (0.1,4);
   \draw[] (-0.1,5) -- (0.1,5);
   \draw[] (-0.1,6) -- (0.1,6);
   \draw[] (-0.1,7) -- (0.1,7);
   \draw[] (-0.1,8) -- (0.1,8);
   \draw[] (-0.1,9) -- (0.1,9);

  \draw[thick, green] (0,3.3).. controls (3,8) and (7,9) ..   (10, 9.4);
   \draw[thick, red] (0,0) --  (10, 10);
    \draw[->, thick] (1,0) -- (1,4.6);
    \draw[->, thick] (1,4.6) -- (4.6,4.6);
    \draw[->, thick] (4.6,4.6) -- (4.6,7.7);
    \draw[->, thick] (4.6,7.7) -- (7.7,7.7);
    \draw[->, thick] (7.7,7.7) -- (7.7,8.9);
    \draw[->, thick] (7.7,8.9) -- (8.9,8.9);
    \draw[->, thick] (10,10) -- (10,9.4);
    \draw[->, thick] (10,9.4) -- (9.4,9.4);
   \node[below] at (1,0){0.1};
   \node[below] at (2,0){0.2};
   \node[below] at (3,0){0.3};
   \node[below] at (4,0){0.4};
   \node[below] at (5,0){0.5};
   \node[below] at (6,0){0.6};
   \node[below] at (7,0){0.7};
   \node[below] at (8,0){0.8};
   \node[below] at (9,0){0.9};
   \node[below] at (10,0){1};
   \node[above] at (11,0){$x$};
   \node[left] at (0,1){0.1};
   \node[left] at (0,2){0.2};
   \node[left] at (0,3){0.3};
   \node[left] at (0,4){0.4};
   \node[left] at (0,5){0.5};
   \node[left] at (0,6){0.6};
   \node[left] at (0,7){0.7};
   \node[left] at (0,8){0.8};
   \node[left] at (0,9){0.9};
   \node[right] at (0,10){$y$};
\end{tikzpicture}
\begin {tikzpicture} [scale=0.55]
   \draw[->, thick] (0,0) -- (0,10);
   \draw[->, thick] (0,0) -- (11,0);
   \draw[] (1,-0.1) -- (1,0.1);
   \draw[] (2,-0.1) -- (2,0.1);
   \draw[] (3,-0.1) -- (3,0.1);
   \draw[] (4,-0.1) -- (4,0.1);
   \draw[] (5,-0.1) -- (5,0.1);
   \draw[] (6,-0.1) -- (6,0.1);
   \draw[] (7,-0.1) -- (7,0.1);
   \draw[] (8,-0.1) -- (8,0.1);
   \draw[] (9,-0.1) -- (9,0.1);
   \draw[] (10,-0.1) -- (10,0.1);
   \draw[] (-0.1,1) -- (0.1,1);
   \draw[] (-0.1,2) -- (0.1,2);
   \draw[] (-0.1,3) -- (0.1,3);
   \draw[] (-0.1,4) -- (0.1,4);
   \draw[] (-0.1,5) -- (0.1,5);
   \draw[] (-0.1,6) -- (0.1,6);
   \draw[] (-0.1,7) -- (0.1,7);
   \draw[] (-0.1,8) -- (0.1,8);
   \draw[] (-0.1,9) -- (0.1,9);

  \draw[thick, green] (0,9.3).. controls (3,5) and (7,4.6) ..   (10, 4.4);
   \draw[thick, red] (0,0) --  (10, 10);
    \draw[->, thick] (1,0) -- (1,8.1);
    \draw[->, thick] (1,8.1) -- (8.1,8.1);
    \draw[->, thick] (8.1,8.1) -- (8.1,4.6);
    \draw[->, thick] (8.1,4.6) -- (4.6,4.6);
    \draw[->, thick] (4.6,4.6) -- (4.6,5.5);
    \draw[->, thick] (4.6,5.5) -- (5.5,5.5);
    \draw[->, thick] (5.5,5.5) -- (5.5,5.1);
    \draw[->, thick] (5.5,5.1) -- (5.1,5.1);
   \node[below] at (1,0){0.1};
   \node[below] at (2,0){0.2};
   \node[below] at (3,0){0.3};
   \node[below] at (4,0){0.4};
   \node[below] at (5,0){0.5};
   \node[below] at (6,0){0.6};
   \node[below] at (7,0){0.7};
   \node[below] at (8,0){0.8};
   \node[below] at (9,0){0.9};
   \node[below] at (10,0){1};
   \node[above] at (11,0){$x$};
   \node[left] at (0,1){0.1};
   \node[left] at (0,2){0.2};
   \node[left] at (0,3){0.3};
   \node[left] at (0,4){0.4};
   \node[left] at (0,5){0.5};
   \node[left] at (0,6){0.6};
   \node[left] at (0,7){0.7};
   \node[left] at (0,8){0.8};
   \node[left] at (0,9){0.9};
   \node[right] at (0,10){$y$};
\end{tikzpicture}

{Figure 2.}\label{fig.2} \ \ \ \ \ \ \ \ \ \ \ \ \ \ \ \ \ \ \ \ \ \ \ \ \ \ {Figure 3.}
\end{center}

\begin{center}
   \begin {tikzpicture} [scale=0.55]
   \draw[->, thick] (0,0) -- (0,10);
   \draw[->, thick] (0,0) -- (11,0);
   \draw[] (1,-0.1) -- (1,0.1);
   \draw[] (2,-0.1) -- (2,0.1);
   \draw[] (3,-0.1) -- (3,0.1);
   \draw[] (4,-0.1) -- (4,0.1);
   \draw[] (5,-0.1) -- (5,0.1);
   \draw[] (6,-0.1) -- (6,0.1);
   \draw[] (7,-0.1) -- (7,0.1);
   \draw[] (8,-0.1) -- (8,0.1);
   \draw[] (9,-0.1) -- (9,0.1);
   \draw[] (10,-0.1) -- (10,0.1);
   \draw[] (-0.1,1) -- (0.1,1);
   \draw[] (-0.1,2) -- (0.1,2);
   \draw[] (-0.1,3) -- (0.1,3);
   \draw[] (-0.1,4) -- (0.1,4);
   \draw[] (-0.1,5) -- (0.1,5);
   \draw[] (-0.1,6) -- (0.1,6);
   \draw[] (-0.1,7) -- (0.1,7);
   \draw[] (-0.1,8) -- (0.1,8);
   \draw[] (-0.1,9) -- (0.1,9);

  \draw[thick, green] (0,6).. controls (2,2) and (7,4.6) ..   (10, 7);
   \draw[thick, red] (0,0) --  (10, 10);
    \draw[->, thick] (0.5,0) -- (0.5,5.2);
    \draw[->, thick] (0.5,5.2) -- (5.2,5.2);
    \draw[->, thick] (5.2,5.2) -- (5.2,4.3);
    \draw[->, thick] (5.2,4.3) -- (4.3,4.3);

   \node[below] at (1,0){0.1};
   \node[below] at (2,0){0.2};
   \node[below] at (3,0){0.3};
   \node[below] at (4,0){0.4};
   \node[below] at (5,0){0.5};
   \node[below] at (6,0){0.6};
   \node[below] at (7,0){0.7};
   \node[below] at (8,0){0.8};
   \node[below] at (9,0){0.9};
   \node[below] at (10,0){1};
   \node[above] at (11,0){$x$};
   \node[left] at (0,1){0.1};
   \node[left] at (0,2){0.2};
   \node[left] at (0,3){0.3};
   \node[left] at (0,4){0.4};
   \node[left] at (0,5){0.5};
   \node[left] at (0,6){0.6};
   \node[left] at (0,7){0.7};
   \node[left] at (0,8){0.8};
   \node[left] at (0,9){0.9};
   \node[right] at (0,10){$y$};
\end{tikzpicture}
 \begin {tikzpicture} [scale=0.55]
   \draw[->, thick] (0,0) -- (0,10);
   \draw[->, thick] (0,0) -- (11,0);
   \draw[] (1,-0.1) -- (1,0.1);
   \draw[] (2,-0.1) -- (2,0.1);
   \draw[] (3,-0.1) -- (3,0.1);
   \draw[] (4,-0.1) -- (4,0.1);
   \draw[] (5,-0.1) -- (5,0.1);
   \draw[] (6,-0.1) -- (6,0.1);
   \draw[] (7,-0.1) -- (7,0.1);
   \draw[] (8,-0.1) -- (8,0.1);
   \draw[] (9,-0.1) -- (9,0.1);
   \draw[] (10,-0.1) -- (10,0.1);
   \draw[] (-0.1,1) -- (0.1,1);
   \draw[] (-0.1,2) -- (0.1,2);
   \draw[] (-0.1,3) -- (0.1,3);
   \draw[] (-0.1,4) -- (0.1,4);
   \draw[] (-0.1,5) -- (0.1,5);
   \draw[] (-0.1,6) -- (0.1,6);
   \draw[] (-0.1,7) -- (0.1,7);
   \draw[] (-0.1,8) -- (0.1,8);
   \draw[] (-0.1,9) -- (0.1,9);

  \draw[thick, green] (0,9).. controls (2,4.2) and (7,2) ..   (10, 4);
   \draw[thick, red] (0,0) --  (10, 10);
    \draw[->, thick] (9.5,0) -- (9.5,3.7);
    \draw[->, thick] (9.5,3.7) -- (3.7,3.7);
    \draw[->, thick] (3.7,3.7) -- (3.7,4.5);
    \draw[->, thick] (3.7,4.5) -- (4.5,4.5);
    \draw[->, thick] (4.5,4.5) -- (4.5,4);
     \draw[->, thick] (4.5,4) -- (4,4);
          \draw[->, thick] (4,4) -- (4,4.3);

   \node[below] at (1,0){0.1};
   \node[below] at (2,0){0.2};
   \node[below] at (3,0){0.3};
   \node[below] at (4,0){0.4};
   \node[below] at (5,0){0.5};
   \node[below] at (6,0){0.6};
   \node[below] at (7,0){0.7};
   \node[below] at (8,0){0.8};
   \node[below] at (9,0){0.9};
   \node[below] at (10,0){1};
   \node[above] at (11,0){$x$};
   \node[left] at (0,1){0.1};
   \node[left] at (0,2){0.2};
   \node[left] at (0,3){0.3};
   \node[left] at (0,4){0.4};
   \node[left] at (0,5){0.5};
   \node[left] at (0,6){0.6};
   \node[left] at (0,7){0.7};
   \node[left] at (0,8){0.8};
   \node[left] at (0,9){0.9};
   \node[right] at (0,10){$y$};
\end{tikzpicture}

{Figure 4.} \ \ \ \ \ \ \ \ \ \ \ \ \ \ \ \ \ \ \ \ \ \ \ \ \ \ {Figure 5.}
\end{center}

\subsection{Discussion.}\

In this paper, we studied a discrete-time dynamical system generated by the
evolution operator of a wild mosquito population with specific rates of birth and emergence
from larvae to adults. The death rates of larvae and adults were assumed to be constant. We
found fixed points and under some conditions, on parameters, we have shown the global attractiveness
of a fixed point. Moreover, we have studied the dynamics of the evolution operator by normalizing the operator. For the normalized operator,  there was defined the type of the fixed point, it was proved that there is no any periodic points, consequently, we studied the set of limit points.

Let $(x, y)\in S$ be an initial state (the probability distribution on the set \{larvae, adults\}). Theorem \ref{wset} says that the population tends to the equilibrium state $(x^*, y^*)$ with the passage of time.
\subsection{Acknowledgement.}  I am grateful to Professor U.A.Rozikov for the comments and suggestions.


\begin{thebibliography}{99}

\bibitem{D} Devaney R.L. (2003), An Introduction to Chaotic Dynamical System, \textit{Westview Press}.

\bibitem{J}  Li J. (2011), Malaria model with stage-structured mosquitoes, \textit{ Math. Biol. Eng.} \textbf{8}, 753-768.

\bibitem{RAU}  Rozikov U.A.,  Akin H.,  Uguz S. (2014), Exact solution of a generalized ANNNI model on a Cayley tree, \textit{Math. Phys. Anal. Geom.}, \textbf{17}(1), 103-114.

\bibitem{J.Li} Li J., Cai L., Li Y. (2017), Stage-structured wild and sterile mosquito population models and their dynamics, \textit{Journal of Biological Dynamics}, {\bf 11}(2), 79-101.

\bibitem{UR}  Rozikov U.A. (2019), An Introduction to Mathematical billiards,  \textit{World Sci. Publ. Sing.}, 224 pp.

\bibitem{Rpd}  Rozikov U.A. (2020), Population dynamics: algebraic and probabilistic approach, \textit{World Sci. Publ., Sing.}, 460 pp.

\bibitem{RSV} Rozikov U.A., Shoyimardonov S.K., Varro R. (2021), Planktons discrete-time dynamical systems, \textit{Nonlinear studies},Vol.28, No.2, p.585-600.

\bibitem{BR1} Boxonov Z.S.,  Rozikov U.A. (2021), A discrete-time dynamical system of stage-structured wild and sterile mosquito population, \textit{Nonlinear studies}, Vol. 28, No.2, p.413-425.

\bibitem{BR2}   Boxonov Z.S.,  Rozikov U.A. (2021), Dynamical system of a mosquito population with distinct birth-death rates, \textit{Journal of Appleid Nonlinear Dynamics}, Vol.10, No.4, p.807-816.

\bibitem{RB3}  Rozikov U.A., Boxonov Z.S. (2022), A discrete-time dynamical system of wild mosquito population with Allee effects, arXiv:2102.08480

\end{thebibliography}
\end{document}